%2multibyte Version: 5.50.0.2953 CodePage: 1254

\documentclass{amsart}
%%%%%%%%%%%%%%%%%%%%%%%%%%%%%%%%%%%%%%%%%%%%%%%%%%%%%%%%%%%%%%%%%%%%%%%%%%%%%%%%%%%%%%%%%%%%%%%%%%%%%%%%%%%%%%%%%%%%%%%%%%%%%%%%%%%%%%%%%%%%%%%%%%%%%%%%%%%%%%%%%%%%%%%%%%%%%%%%%%%%%%%%%%%%%%%%%%%%%%%%%%%%%%%%%%%%%%%%%%%%%%%%%%%%%%%%%%%%%%%%%%%%%%%%%%%%
\usepackage{amsmath}
\usepackage{amsfonts}

\setcounter{MaxMatrixCols}{10}
%TCIDATA{OutputFilter=LATEX.DLL}
%TCIDATA{Version=5.50.0.2953}
%TCIDATA{Codepage=1254}
%TCIDATA{<META NAME="SaveForMode" CONTENT="1">}
%TCIDATA{BibliographyScheme=Manual}
%TCIDATA{Created=Wednesday, September 26, 2012 10:28:37}
%TCIDATA{LastRevised=Monday, August 17, 2015 13:41:16}
%TCIDATA{<META NAME="GraphicsSave" CONTENT="32">}
%TCIDATA{<META NAME="DocumentShell" CONTENT="Articles\SW\AMS Journal Article">}
%TCIDATA{CSTFile=amsartci.cst}

\newtheorem{theorem}{Theorem}
\theoremstyle{plain}

\numberwithin{equation}{section}
\input{tcilatex}

\begin{document}
\title[A Note on Bicomplex Fibonacci and Lucas Numbers]{A Note on Bicomplex
Fibonacci and Lucas Numbers}
\author{Semra KAYA NURKAN}
\author{\.{I}lkay ARSLAN G\"{U}VEN}
\subjclass[2000]{11B39 , 11E88.}
\keywords{Fibonacci numbers, Lucas numbers, bicomplex numbers}

\begin{abstract}
In this study, we define a new type of Fibonacci and Lucas numbers which are
called bicomplex Fibonacci and bicomplex Lucas numbers. We obtain the
well-known properties e.g. D'ocagnes, Cassini, Catalan for these new types.
We also give the identities of negabicomplex Fibonacci and negabicomplex
Lucas numbers, Binet formulas and relations of them.
\end{abstract}

\maketitle

\section{\textbf{Introduction}}

Fibonacci numbers are invented by Italian mathematician Leonardo Fibonacci
when he wrote his first book Liber Abaci in 1202 contains many elementary
problems, including famous rabbit problem.

The Fibonacci numbers are the numbers of the following integer sequence;%
\begin{equation*}
1,1,2,3,5,8,13,21,34,55,89,...
\end{equation*}%
The sequence of Fibonacci numbers which is denoted by $F_{n}$ is defined as
the linear reccurence relation%
\begin{equation*}
F_{n}=F_{n-1}+F_{n-2}
\end{equation*}%
with $F_{1}=F_{2}=1$ and $n\in \mathbb{Z}$. Also as a result of this
relation we can define \ $F_{0}=0$. Fibonacci numbers are connected with the
golden ratio as; the ratio of two consecutive Fibonacci numbers approximates
the golden ratio $1,61803399...$.

Fibonacci numbers are closely related to Lucas numbers which are named after
the mathematician Francois Edouard Anatole Lucas who worked on both
Fibonacci and Lucas numbers. The integer sequence of Lucas numbers denoted
by $L_{n}$ is given by%
\begin{equation*}
2,1,3,4,7,11,18,29,47,...
\end{equation*}%
with the same reccurence relation 
\begin{equation*}
L_{n}=L_{n-1}+L_{n-2}.
\end{equation*}%
where $L_{0}=2$, $L_{1}=1$, $L_{2}=3$ and $n\in \mathbb{Z}.$

There are many works on Fibonacci and Lucas numbers in literature. The
properties, relations, results between Fibonacci and Lucas numbers can be
found in, Dunlap\cite{Dun}, Koshy\cite{Koshy}, Vajda\cite{Vajda}, Verner and
Hoggatt\cite{Verner}. Also Fibonacci and Lucas quaternions were described by
Horadam in \cite{Hor2}, then many studies related with these quaternions
were done in Akyi\u{g}it \cite{Ak}, Halici \cite{Halici}, Iyer \cite{Iyer},
Swamy \cite{Swamy}.

Corrado Segre introduced bicomplex numbers in 1892 \cite{Seg}. The bicomplex
numbers are a type of Clifford algebra, one of several possible
generalizations of the ordinary complex numbers. The complex numbers denoted
as $\mathbb{C}$ are defined by%
\begin{equation*}
\mathbb{C=}\left \{ a+bi\text{ }\mid \text{ }a,b\in \mathbb{R}\text{ and }%
i^{2}=-1\right \} .
\end{equation*}%
The bicomplex numbers are defined by%
\begin{equation*}
\mathbb{C}_{2}=\left \{ z_{1}+z_{2}j\text{ }\mid \text{ }z_{1},z_{2}\in 
\mathbb{C}\text{ and }j^{2}=-1\right \} .
\end{equation*}%
Since $z_{1}$ and $z_{2}$ are complex numbers, writing $z_{1}=a+bi$ \ and \ $%
z_{2}=c+di$ gives us another way to represent the bicomplex numbers as;%
\begin{equation*}
z_{1}+z_{2}j=a+bi+cj+dij.
\end{equation*}%
where $a,b,c,d$ $\in \mathbb{R}$. Thus the set of bicomplex numbers can be
expressed by%
\begin{equation*}
\mathbb{C}_{2}=\left \{ a+bi+cj+dij\mid \text{ }a,b,c,d\in \mathbb{R}\text{
and }i^{2}=-1,j^{2}=-1,\text{ }ij=ji=k\right \} .
\end{equation*}%
For any bicomplex numbers $x=a_{1}+b_{1}i+c_{1}j+d_{1}ij$ \ and \ $%
y=a_{2}+b_{2}i+c_{2}j+d_{2}ij,$ the addition and multiplication of these
bicomplex numbers are given respectively by%
\begin{equation}
x+y=(a_{1}+a_{2})+(b_{1}+b_{2})i+(c_{1}+c_{2})j+(d_{1}+d_{2})ij  \tag{1.1}
\end{equation}%
and%
\begin{eqnarray}
x\times y
&=&(a_{1}a_{2}-b_{1}b_{2}-c_{1}c_{2}-d_{1}d_{2})+(a_{1}b_{2}+b_{1}a_{2}-c_{1}d_{2}-d_{1}c_{2})i
\TCItag{1.2} \\
&&+(a_{1}c_{2}+c_{1}a_{2}-b_{1}d_{2}-d_{1}b_{2})j+(a_{1}d_{2}+d_{1}a_{2}+b_{1}c_{2}+c_{1}b_{2})ij.
\notag
\end{eqnarray}%
The multiplication of a bicomplex number by a real scalar $\lambda $ is
given by%
\begin{equation*}
\lambda x=\lambda a_{1}+\lambda b_{1}i+\lambda c_{1}j+\lambda d_{1}ij.
\end{equation*}%
The set $\mathbb{C}_{2}$ forms a commutative ring with addition and
multiplication and it is a real vector space with addition and scalar
multiplication \cite{Price}.

In $\mathbb{C}$, the complex conjugate of $z=a+bi$ \ is \ $\overline{z}%
=a-bi. $ In $\mathbb{C}_{2}$, for a bicomplex number $%
x=(a_{1}+b_{1}i)+(c_{1}+d_{1}i)j$, there is there different conjugations
which are;%
\begin{eqnarray}
x^{\star } &=&\left[ (a_{1}+b_{1}i)+(c_{1}+d_{1}i)j\right] ^{^{\star
}}=(a_{1}-b_{1}i)+(c_{1}-d_{1}i)j  \notag \\
x^{\circ } &=&\left[ (a_{1}+b_{1}i)+(c_{1}+d_{1}i)j\right] ^{\circ
}=(a_{1}+b_{1}i)-(c_{1}+d_{1}i)j  \TCItag{1.3} \\
x^{\dagger } &=&\left[ (a_{1}+b_{1}i)+(c_{1}+d_{1}i)j\right] ^{^{\dagger
}}=(a_{1}-b_{1}i)-(c_{1}-d_{1}i)j  \notag
\end{eqnarray}%
\cite{Roc}. Then the following equation are written in \cite{Kara};%
\begin{eqnarray}
x\times x^{\star }
&=&(a_{1}^{2}+b_{1}^{2}-c_{1}^{2}-d_{1}^{2})+2(a_{1}c_{1}+b_{1}d_{1})j 
\notag \\
x\times x^{\circ }
&=&(a_{1}^{2}-b_{1}^{2}+c_{1}^{2}-d_{1}^{2})+2(a_{1}b_{1}+c_{1}d_{1})i 
\TCItag{1.4} \\
x\times x^{\dagger }
&=&(a_{1}^{2}+b_{1}^{2}+c_{1}^{2}+d_{1}^{2})+2(a_{1}d_{1}-b_{1}c_{1})ij. 
\notag
\end{eqnarray}%
Also in \cite{Roc}, the modulus of a bicomplex number $x$ is defined by%
\begin{eqnarray}
\left \vert x\right \vert _{i} &=&\sqrt{x\times x^{\star }}  \notag \\
\left \vert x\right \vert _{j} &=&\sqrt{x\times x^{\circ }}  \TCItag{1.5} \\
\left \vert x\right \vert _{k} &=&\sqrt{x\times x^{\dagger }}  \notag \\
\left \vert x\right \vert &=&\sqrt{\func{Re}(x\times x^{\dagger })}  \notag
\end{eqnarray}%
where the names are $i-$modulus, $j-$modulus, $k-$modulus and real modulus,
respectively.

Rochon and Shapiro gave the detailed algebratic properties of bicomplex and
hyperbolic numbers in \cite{Roc}. The generalized bicomplex numbers were
defined by Karaku\c{s} and Aksoyak in \cite{Kara}, they gave some algebratic
\ properties of them and used generalized bicomplex number product to show
that $\mathbb{R}^{4}$ and $\mathbb{R}_{2}^{4}$ were Lie groups. Also
Luna-Elizarraras \textit{et al }\cite{Luna}, introduced the algebra of
bicomplex numbers and described how to define elementary functions and their
inverse functions in that algebra.

In this paper, we define bicomplex Fibonacci and bicomplex Lucas numbers by
combining bicomplex numbers and Fibonacci, Lucas numbers. We give some
identities and Binet formulas of these new numbers.

\section{\textbf{Bicomplex Fibonacci Numbers}}

\textbf{Definition 1: }The \textit{bicomplex Fibonacci} and \textit{%
bicomplex Lucas numbers} are defined respectively by%
\begin{equation}
BF_{n}=F_{n}+F_{n+1}i+F_{n+2}j+F_{n+3}k  \tag{2.1}
\end{equation}%
and%
\begin{equation}
BL_{n}=L_{n}+L_{n+1}i+L_{n+2}j+L_{n+3}k  \tag{2.2}
\end{equation}%
where $F_{n}$ is the $n^{th}$ Fibonacci number, $L_{n}$ is the $n^{th}$
Lucas number and $i,j,k$ are bicomplex units which satisfy the commutative
multiplication rules:%
\begin{eqnarray*}
i^{2} &=&-1,\text{ }j^{2}=-1,\text{ }k^{2}=1 \\
ij &=&ji=k,\text{ }jk=kj=-i,\text{ }ik=ki=-j.
\end{eqnarray*}

Starting from $n=0$, the bicomplex Fibonacci and bicomplex Lucas numbers can
be written respectively as;%
\begin{equation*}
BF_{0}=1i+1j+2k,\text{ \ }BF_{1}=1+1i+2j+3k\text{ , \ }BF_{2}=1+2i+3j+5k%
\text{,...}
\end{equation*}%
\begin{equation*}
BL_{0}=2+1i+3j+4k\text{ \ , \ }BL_{1}=1+3i+4j+7k\text{ \ , \ }%
BL_{2}=3+4i+7j+11k\text{,...}
\end{equation*}

\bigskip

Let $BF_{n}=$ $F_{n}+F_{n+1}i+F_{n+2}j+F_{n+3}k$ and $%
BF_{m}=F_{m}+F_{m+1}i+F_{m+2}j+F_{m+3}k$ be two bicomplex Fibonacci numbers.
By taking into account the equations (1.1) and (1.2), the addition,
subtraction and multiplication of these numbers are given by%
\begin{equation*}
BF_{n}\pm BF_{m}=\left( F_{n}\pm F_{m}\right) +\left( F_{n+1}\pm
F_{m+1}\right) i+\left( F_{n+2}\pm F_{m+2}\right) j+\left( F_{n+3}\pm
F_{m+3}\right) k
\end{equation*}%
and%
\begin{eqnarray*}
BF_{n}\times BF_{m} &=&\left(
F_{n}F_{m}-F_{n+1}F_{m+1}-F_{n+2}F_{m+2}+F_{n+3}F_{m+3}\right) \\
&&+\left( F_{n}F_{m+1}+F_{n+1}F_{m}-F_{n+2}F_{m+3}+F_{n+3}F_{m+2}\right) i \\
&&+\left( F_{n}F_{m+2}+F_{n+2}F_{m}-F_{n+1}F_{m+3}-F_{n+3}F_{m+1}\right) j \\
&&+\left( F_{n}F_{m+3}+F_{n+3}F_{m}+F_{n+1}F_{m+2}+F_{n+2}F_{m+1}\right) k.
\end{eqnarray*}

\textbf{Definition 2: }A bicomplex Fibonacci number can also be expressed as 
$BF_{n}=\left( F_{n}+F_{n+1}i\right) +\left( F_{n+2}+F_{n+3}i\right) j$. In
that case,there is three different \textit{conjugations} with respect to $i,$
$j$ and $k;$ for bicomplex Fibonacci numbers as follows:%
\begin{eqnarray*}
\overline{BF_{n}}^{i} &=&\left[ \left( F_{n}+F_{n+1}i\right) +\left(
F_{n+2}+F_{n+3}i\right) j\right] ^{i}=\left( F_{n}-F_{n+1}i\right) +\left(
F_{n+2}-F_{n+3}i\right) j \\
\overline{BF_{n}}^{j} &=&\left[ \left( F_{n}+F_{n+1}i\right) +\left(
F_{n+2}+F_{n+3}i\right) j\right] ^{j}=\left( F_{n}+F_{n+1}i\right) -\left(
F_{n+2}+F_{n+3}i\right) j \\
\overline{BF_{n}}^{k} &=&\left[ \left( F_{n}+F_{n+1}i\right) +\left(
F_{n+2}+F_{n+3}i\right) j\right] ^{k}=\left( F_{n}-F_{n+1}i\right) -\left(
F_{n+2}-F_{n+3}i\right) j.
\end{eqnarray*}%
By the Definition 2 and the equations (1.4) and (2.1), we can write%
\begin{eqnarray*}
BF_{n}\times \overline{BF_{n}}^{i} &=&\left( F_{2n+1}-F_{2n+7}\right)
+2\left( F_{2n+3}\right) j \\
BF_{n}\times \overline{BF_{n}}^{j} &=&\left(
F_{n}^{2}-F_{n+1}^{2}+F_{n+3}^{2}-F_{n+4}^{2}\right) +2\left(
F_{n}F_{n+1}+F_{n+2}F_{n+3}\right) i \\
BF_{n}\times \overline{BF_{n}}^{k} &=&\left( F_{2n+1}+F_{2n+7}\right)
+2\left( -1\right) ^{n+1}k.
\end{eqnarray*}

\textbf{Definition 3: }Let a bicomplex Fibonacci number be $%
BF_{n}=F_{n}+F_{n+1}i+F_{n+2}j+F_{n+3}k.$ The $i-$\textit{modulus}, $j-$%
\textit{modulus}, $k-$\textit{modulus} and \textit{real modulus }of $BF_{n}$
are given respectively as;%
\begin{eqnarray*}
\left\vert BF_{n}\right\vert _{i} &=&\sqrt{BF_{n}\times \overline{BF_{n}}^{i}%
} \\
\left\vert BF_{n}\right\vert _{j} &=&\sqrt{BF_{n}\times \overline{BF_{n}}^{j}%
} \\
\left\vert BF_{n}\right\vert _{k} &=&\sqrt{BF_{n}\times \overline{BF_{n}}^{k}%
} \\
\left\vert BF_{n}\right\vert &=&\sqrt{F_{2n+1}+F_{2n+7}}.
\end{eqnarray*}

In \cite{Nurkan}, we proved some summation equations by supposing, \ for $%
i\geq 0$%
\begin{equation}
\overset{M}{\underset{m=0}{\dsum }}\alpha _{m}F_{m+i}+\overset{M}{\underset{%
m=0}{\dsum }}\beta _{m}L_{m+i}=0  \tag{2.3}
\end{equation}%
where $\alpha _{m}$ and $\beta _{m}$ are fixed numbers.

By using the equations (2.1), (2.2), (2.3) and taking $i=0,1,2,3$, we can
write the following equation clearly;%
\begin{equation}
\overset{M}{\underset{m=0}{\dsum }}\alpha _{m}BF_{m}+\overset{M}{\underset{%
m=0}{\dsum }}\beta _{m}BL_{m}=0  \tag{2.4}
\end{equation}

Now let us give theorems which give some properties.

\begin{theorem}
Let $BF_{n}$ and $BL_{n}$ be a bicomplex Fibonacci and a bicomplex Lucas
number$,$ respectively. For $n\geq 0$ , the following relations hold:%
\begin{equation*}
\begin{array}{l}
1)\text{ \ }BF_{n}+BF_{n+1}=BF_{_{n+2}} \\ 
2)\text{ \ }BL_{n}+BL_{n+1}=BL_{_{n+2}} \\ 
3)\text{ \ }BL_{n}=BF_{n-1}+BF_{n+1} \\ 
4)\text{ \ }BL_{n}=BF_{n+2}-BF_{n-2} \\ 
5)\text{ \ }%
BF_{n}^{2}+BF_{n+1}^{2}=BF_{2n+1}+F_{2n+2}+F_{2n+5}-3iF_{2n+5}-jF_{2n+6}+3kF_{2n+4}
\\ 
6)\text{ \ }BF_{n+1}^{2}-BF_{n-1}^{2}=2BF_{2n}+F_{2n+4}+F_{2n-1}+2\left(
-F_{2n+5}i-F_{2n+4}j+F_{2n+3}k\right) \\ 
7)\text{ \ }%
BF_{n}BF_{m}+BF_{n+1}BF_{m+1}=2BF_{n+m+1}+2F_{n+m+4}-F_{n+m+1}-2F_{n+m+6}i
\\ 
\text{ \ \ \ \ \ \ \ \ \ \ \ \ \ \ \ \ \ \ \ \ \ \ \ \ \ \ \ \ \ \ \ \ \ \ \
\ \ \ }-2F_{n+m+5}j+2F_{n+m+4}k \\ 
8)\text{ \ }BF_{n}-BF_{n+1}i+BF_{n+2}j-BF_{n+3}k=-5F_{n+3}%
\end{array}%
\end{equation*}
\end{theorem}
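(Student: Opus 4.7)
The eight assertions split into three groups according to the algebra involved. The linear identities (1)--(4) are almost immediate: after expanding both sides using (2.1) and (2.2) and equating the real, $i$-, $j$-, and $k$-components separately, each reduces to a classical scalar identity applied at the four consecutive shifts $n,n+1,n+2,n+3$. For (1) and (2) this is the defining Fibonacci/Lucas recurrence, and for (3) and (4) the well-known $L_k=F_{k-1}+F_{k+1}$ and $L_k=F_{k+2}-F_{k-2}$. Distributivity of bicomplex addition dispatches all four with no obstacle.

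Parts (5)--(7) are the substantive computations. I would specialize the bicomplex product formula (1.2), exactly as already adapted to bicomplex Fibonacci numbers just after Definition~1, to the pairs $(BF_n,BF_n)$, $(BF_{n+1},BF_{n+1})$, $(BF_{n-1},BF_{n-1})$, and $(BF_n,BF_m)$. This produces, for each identity, four scalar expressions---one per basis element---each a signed sum of products $F_aF_b$. Because of the signs forced by $i^{2}=j^{2}=-1$ and $k^{2}=+1$, the combinations that appear are exactly those handled by the classical product identities
\begin{equation*}
F_aF_b+F_{a+1}F_{b+1}=F_{a+b+1},\qquad F_n^{2}+F_{n+1}^{2}=F_{2n+1},\qquad F_{n+1}^{2}-F_{n-1}^{2}=F_{2n},
\end{equation*}
together with D'Ocagne's identity. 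Typically two of the four components telescope back into a single bicomplex term $BF_{2n+1}$, $BF_{2n}$, or $BF_{n+m+1}$, while the other two leave residual Fibonacci corrections that account for the $iF_{\cdot}$, $jF_{\cdot}$, $kF_{\cdot}$ terms appearing on the right-hand side. I would proceed component by component, invoking the relevant product identity for each.

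Finally, (8) is handled by expanding $BF_{n+1}i$, $BF_{n+2}j$, and $BF_{n+3}k$ using the commutative multiplication table $i^{2}=j^{2}=-1$, $k^{2}=1$, $ij=k$, $jk=-i$, $ik=-j$ from Definition~1. Summing the four terms with the prescribed signs and collecting by basis element, I would verify the cancellations in the $i$-, $j$-, and $k$-coefficients and show that the scalar component reduces, via iterated use of the Fibonacci recurrence, to $-5F_{n+3}$. The main obstacle across the theorem is the extensive bookkeeping in (5)--(7): each product expands into sixteen $F_aF_b$ monomials, and matching their signs and shifts so that every component collapses to exactly the stated residual is the principal place an error can enter. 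Beyond the definitions, the product rule (1.2), and a short catalogue of classical Fibonacci product identities, no conceptually new ingredient is required.
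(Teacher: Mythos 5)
Your plan follows essentially the same route as the paper: parts (1)--(4) by linearity (the paper packages this as the lifting principle (2.4), which is just your componentwise argument stated once and for all), parts (5)--(7) by expanding the product (1.2) and collapsing components with $F_n^2+F_{n+1}^2=F_{2n+1}$, $F_{n+1}^2-F_{n-1}^2=F_{2n}$ and $F_aF_b+F_{a+1}F_{b+1}=F_{a+b+1}$, and part (8) by direct use of the multiplication table. So methodologically there is nothing to object to.

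One concrete warning, though: the cancellation you propose to ``verify'' in (8) does not actually occur in the $j$-component. Carrying out your own plan, the $j$-coefficient of $BF_{n}-BF_{n+1}i+BF_{n+2}j-BF_{n+3}k$ is $F_{n+2}+F_{n+4}+F_{n+2}+F_{n+4}=2L_{n+3}$ (e.g.\ for $n=0$ one computes $BF_0-BF_1i+BF_2j-BF_3k=-10+8j$, while $-5F_3=-10$), because with the commutative rules $ki=-j$ and $ik=-j$ the two cross terms reinforce rather than cancel. The $i$- and $k$-components do vanish and the real part is $F_n+F_{n+2}-F_{n+4}-F_{n+6}=-5F_{n+3}$, so the correct identity is $BF_{n}-BF_{n+1}i+BF_{n+2}j-BF_{n+3}k=-5F_{n+3}+2L_{n+3}j$; the paper's proof silently discards the $j$-term. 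Be sure your write-up of (8) records this residual term rather than asserting the stated purely real value.
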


\begin{proof}
By using the equations of (2.1), (2.2), (2.4) and taking appropriate fixed
numbers in the equation (2.4), the proofs of $1)$, $2)$, $3)$, $4)$ are
clear.

5) , 6) , 7), 8) From the definition of Fibonacci number , the bicomplex
Fibonacci number in (2.1) , the equations $F_{n}^{2}+F_{n+1}^{2}=F_{2n+1}$ , 
$F_{n+1}^{2}-F_{n-1}^{2}=F_{2n}$ $\ $and $\
F_{n}F_{m}+F_{n+1}F_{m+1}=F_{n+m+1}$ (see Vajda \cite{Vajda}), we get%
\begin{eqnarray*}
BF_{n}^{2}+BF_{n+1}^{2} &=&\left( F_{n}+F_{n+1}i+F_{n+2}j+F_{n+3}k\right)
^{2} \\
&&+\left( F_{n+1}+F_{n+2}i+F_{n+3}j+F_{n+4}k\right) ^{2} \\
&=&\left( 
\begin{array}{c}
F_{2n+3}+2\left( F_{n}F_{n+1}-F_{n+2}F_{n+3}\right) i+ \\ 
2\left( F_{n}F_{n+2}-F_{n+1}F_{n+3}\right) j+2\left(
F_{n}F_{n+3}+F_{n+1}F_{n+2}\right) k%
\end{array}%
\right) \\
&&+\left( 
\begin{array}{c}
F_{2n+5}+2\left( F_{n+1}F_{n+2}-F_{n+3}F_{n+4}\right) i \\ 
+2\left( F_{n+1}F_{n+3}-F_{n+2}F_{n+4}\right) j \\ 
+2\left( F_{n+1}F_{n+4}+F_{n+2}F_{n+3}\right) k%
\end{array}%
\right) \\
&=&F_{2n+1}+F_{2n+2}i+F_{2n+3}j+F_{2n+4}k+F_{2n+2} \\
&&+F_{2n+5}-3F_{2n+5}i-F_{2n+6}j+3F_{2n+4}k \\
&=&BF_{2n+1}+F_{2n+2}+F_{2n+5}-3iF_{2n+5}-jF_{2n+6}+3kF_{2n+4}.
\end{eqnarray*}

\begin{eqnarray*}
\text{\ }BF_{n+1}^{2}-BF_{n-1}^{2} &=&\left(
F_{n+1}+F_{n+2}i+F_{n+3}j+F_{n+4}k\right) ^{2} \\
&&-\left( F_{n-1}+F_{n}i+F_{n+1}j+F_{n+2}k\right) ^{2} \\
&=&\left( 
\begin{array}{c}
F_{2n+5}+2\left( F_{n+1}F_{n+2}-F_{n+3}F_{n+4}\right) i+ \\ 
2\left( F_{n+1}F_{n+3}-F_{n+2}F_{n+4}\right) j \\ 
+2\left( F_{n+1}F_{n+4}+F_{n+2}F_{n+3}\right)%
\end{array}%
\right) \\
&&-\left( 
\begin{array}{c}
F_{2n+1}+2\left( F_{n-1}F_{n}-F_{n+1}F_{n+2}\right) i \\ 
+2\left( F_{n-1}F_{n+1}-F_{n}F_{n+2}\right) j \\ 
+2\left( F_{n-1}F_{n+2}+F_{n}F_{n+1}\right) k%
\end{array}%
\right) \\
&=&2BF_{2n}+F_{2n+4}+F_{2n-1}+2\left( -F_{2n+5}i-F_{2n+4}j+F_{2n+3}k\right)
\end{eqnarray*}

\begin{eqnarray*}
BF_{n}BF_{m}+BF_{n+1}BF_{m+1} &=&\left(
F_{n}+F_{n+1}i+F_{n+2}j+F_{n+3}k\right) \\
&&.(F_{m}+F_{m+1}i+F_{m+2}j+F_{m+3}k) \\
&&+\left( F_{n+1}+F_{n+2}i+F_{n+3}j+F_{n+4}k\right) \\
&&.\left( F_{m+1}+F_{m+2}i+F_{m+3}j+F_{m+4}k\right) \\
&=&2\left( F_{n+m+1}+F_{n+m+2}i+F_{n+m+3}j+F_{n+m+4}k\right) \\
&&+2F_{n+m+4}-F_{n+m+1}-2F_{n+m+6}i \\
&&-2F_{n+m+5}j+2F_{n+m+4}k \\
&=&2BF_{n+m+1}+2F_{n+m+4}-F_{n+m+1}-2F_{n+m+6}i \\
&&-2F_{n+m+5}j+2F_{n+m+4}k
\end{eqnarray*}%
and%
\begin{eqnarray*}
BF_{n}-BF_{n+1}i+BF_{n+2}j-BF_{n+3}k &=&\left(
F_{n}+F_{n+1}i+F_{n+2}j+F_{n+3}k\right) - \\
&&\left( F_{n+1}+F_{n+2}i+F_{n+3}j+F_{n+4}k\right) i \\
&&+\left( F_{n+2}+F_{n+3}i+F_{n+4}j+F_{n+5}k\right) j \\
&&-\left( F_{n+3}+F_{n+4}i+F_{n+5}j+F_{n+6}k\right) k \\
&=&F_{n}+F_{n+2}-F_{n+4}-F_{n+6} \\
&=&-5F_{n+3}.
\end{eqnarray*}
\end{proof}

\begin{theorem}
For $n,m\geq 0$ the D'ocagnes identity for bicomplex Fibonacci numbers $%
BF_{n}$ $\ $and $\ BF_{m}$ is given by%
\begin{equation*}
BF_{m}BF_{n+1}-BF_{m+1}BF_{n}=\left( -1\right) ^{n}BF_{m-n}+\left( -1\right)
^{n+1}\left( 
\begin{array}{c}
F_{m-n}+F_{m-n+1}i \\ 
-\left( F_{m-n-2}+2F_{m-n+2}\right) j \\ 
+2F_{m-n-1}k%
\end{array}%
\right) .
\end{equation*}
\end{theorem}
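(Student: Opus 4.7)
The plan is to follow the same direct-expansion strategy that the authors use for parts 5)--7) of Theorem~1, now applied to $BF_m BF_{n+1}-BF_{m+1}BF_n$. I would substitute $BF_p = F_p + F_{p+1} i + F_{p+2} j + F_{p+3} k$ into both products, carry out the bicomplex multiplication coordinate-by-coordinate using the formula already established for $BF_n \times BF_m$ in Section~2, and then subtract. This produces, in each of the real, $i$, $j$, and $k$ coordinates, a sum of Fibonacci products that (after routine cancellations of common terms) reduces to a small collection of binomial differences of the form $F_a F_{b+s}-F_{a+s}F_b$ with shifts $s\in\{1,2,3,4\}$.

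Each such pair is handled by the shifted d'Ocagne identity
\[
F_a F_{b+s}-F_{a+s}F_b = (-1)^b F_s F_{a-b},
\]
a standard consequence of the classical d'Ocagne formula $F_m F_{n+1}-F_{m+1}F_n = (-1)^n F_{m-n}$, provable by induction on $s$ or by direct application of Binet's formula. After this pairwise reduction, every coordinate of the left-hand side collapses to a signed integer multiple of $F_{m-n}$; in particular the real and $i$ coordinates produce a telescoping cancellation and vanish, while the $j$ and $k$ coordinates retain nonzero scalar multiples of $F_{m-n}$. To match the announced form I would then rewrite these scalar multiples of $F_{m-n}$ in terms of shifted Fibonacci numbers $F_{m-n\pm\ell}$ via elementary identities such as $F_{r+3}=3F_r+2F_{r-1}$ and $F_{r+2}+F_{r-2}=3F_r$, so that the four coordinates assemble into $(-1)^n BF_{m-n}$ plus the displayed correction term.

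The main obstacle is purely bookkeeping: as many as sixteen Fibonacci products appear in the raw expansion, and the correct d'Ocagne pairing must be spotted separately in each of the four coordinates, with the alternating signs $(-1)^{n+\ell}$ tracked carefully. No conceptual tool beyond the shifted d'Ocagne identity and the Fibonacci recurrence is required; once the correct pairings and sign conventions are fixed, the identity follows by direct algebraic manipulation, exactly in the spirit of the computations carried out for items 5)--7) of Theorem~1.
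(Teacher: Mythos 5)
Your strategy---expand both products with the bicomplex multiplication rule, then collapse each coordinate with the shifted d'Ocagne identity $F_aF_{b+s}-F_{a+s}F_b=(-1)^bF_sF_{a-b}$---is essentially the paper's own approach (the paper uses only the case $s=1$ together with ad hoc regroupings, so your lemma, which is correct, actually organizes the bookkeeping better). Carried out honestly, it gives: real and $i$-coordinates zero, as you predict; $j$-coordinate
\[
\left(F_mF_{n+3}-F_{m+3}F_n\right)+\left(F_{m+4}F_{n+1}-F_{m+1}F_{n+4}\right)+\left(F_{m+2}F_{n+1}-F_{m+1}F_{n+2}\right)+\left(F_{m+2}F_{n+3}-F_{m+3}F_{n+2}\right)=6(-1)^nF_{m-n};
\]
and $k$-coordinate $F_mF_{n+4}-F_{m+4}F_n=3(-1)^nF_{m-n}$. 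Hence the difference equals $3(-1)^nF_{m-n}\left(2j+k\right)$, which, reassuringly, specializes at $m-n=-1$ to the paper's Cassini identity $3(-1)^n(2j+k)$ in Theorem~5.

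The gap is in your final step, the one you defer to ``bookkeeping'': the announced right-hand side cannot be recovered from this, because its $j$-coefficient is $(-1)^n\left(F_{m-n+2}+F_{m-n-2}+2F_{m-n+2}\right)=(-1)^n\left(5F_{m-n}+2F_{m-n+1}\right)$, which is not $6(-1)^nF_{m-n}$ for general $m,n$. No elementary Fibonacci identity will rewrite $6F_{m-n}$ as $3F_{m-n+2}+F_{m-n-2}$; for instance, at $m=2$, $n=0$ direct computation gives $BF_2BF_1-BF_3BF_0=6j+3k$, while the displayed formula evaluates to $9j+3k$. (The real, $i$, and $k$ components of the stated formula do check out: $F_{m-n+3}-2F_{m-n-1}=3F_{m-n}$.) So your method is sound and proves the corrected identity $BF_mBF_{n+1}-BF_{m+1}BF_n=3(-1)^nF_{m-n}(2j+k)$, but the promised reconciliation with the theorem as printed fails in the $j$-coordinate; you would need to flag and repair that coefficient rather than assume the match.
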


\begin{proof}
If we decide the equation (2.1) and the D'ocagnes identity for Fibonacci
numbers $F_{m}F_{n+1}-F_{m+1}F_{n}=\left( -1\right) ^{n}F_{m-n}$ (see
Weisstein \cite{Weis} ), we obtain the following calculations as;%
\begin{eqnarray*}
BF_{m}BF_{n+1}-BF_{m+1}BF_{n} &=&\left(
F_{m}+F_{m+1}i+F_{m+2}j+F_{m+3}k\right) (F_{n+1}+F_{n+2}i \\
&&+F_{n+3}j+F_{n+4}k)-(F_{m+1}+F_{m+2}i+F_{m+3}j \\
&&+F_{m+4}k)\left( F_{n}+F_{n+1}i+F_{n+2}j+F_{n+3}k\right) \\
&=&\left[ F_{n+2}\left( F_{m}+F_{m+4}\right) -F_{m+2}\left(
F_{n}+F_{n+4}\right) \right] i \\
&&+\left[ 2\left( -1\right) ^{n}\left( F_{m-n-2}+F_{m-n+2}\right) \right] j
\\
&&+\left[ \left( -1\right) ^{n}\left( F_{m-n-2}+F_{m-n+2}\right) \right] k \\
&&+\left( -1\right) ^{n}BF_{m-n}-\left( -1\right) ^{n}BF_{m-n} \\
&=&\left( -1\right) ^{n}BF_{m-n} \\
&&+\left( -1\right) ^{n+1}\left( 
\begin{array}{c}
F_{m-n}+F_{m-n+1}i \\ 
-\left( F_{m-n-2}+2F_{m-n+2}\right) j \\ 
+2F_{m-n-1}k%
\end{array}%
\right) .
\end{eqnarray*}
\end{proof}

\begin{theorem}
If $BF_{n}$ and $BL_{n}$\ are bicomplex Fibonacci and bicomplex Lucas
numbers respectively, then for $n\geq 0$, the identities of negabicomplex
Fibonacci and negabicomplex Lucas numbers are%
\begin{equation*}
BF_{-n}=\left( -1\right) ^{n+1}BF_{n}+\left( -1\right) ^{n}L_{n}\left(
i+j+2k\right)
\end{equation*}%
and 
\begin{equation*}
BL_{-n}=\left( -1\right) ^{n}BL_{n}+\left( -1\right) ^{n+1}5F_{n}\left(
i+j+2k\right)
\end{equation*}
\end{theorem}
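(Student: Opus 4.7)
The plan is to expand $BF_{-n}$ and $BL_{-n}$ from Definition 1 and reduce each of the four components via the standard negaFibonacci and negaLucas identities
$$F_{-m}=(-1)^{m+1}F_m,\qquad L_{-m}=(-1)^m L_m,$$
both of which follow at once from running the recurrences $F_{n+2}=F_{n+1}+F_n$ and $L_{n+2}=L_{n+1}+L_n$ backwards. After substitution, one compares the result against $(-1)^{n+1}BF_n$ (respectively $(-1)^n BL_n$) and the residue should collapse to the stated scalar multiple of $i+j+2k$.

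For the bicomplex Fibonacci identity, substituting the negaFibonacci rule yields
$$BF_{-n}=(-1)^{n+1}F_n+(-1)^n F_{n-1}\,i+(-1)^{n+1}F_{n-2}\,j+(-1)^n F_{n-3}\,k,$$
and subtracting $(-1)^{n+1}BF_n$ leaves the coefficient of $i$, $j$, $k$ equal to $(-1)^n(F_{n-1}+F_{n+1})$, $(-1)^{n+1}(F_{n-2}-F_{n+2})$, and $(-1)^n(F_{n-3}+F_{n+3})$, respectively. The first is $(-1)^n L_n$ by $L_n=F_{n-1}+F_{n+1}$ (scalar version of part 3 of Theorem 1); the second is likewise $(-1)^n L_n$ via $F_{n+2}-F_{n-2}=L_n$ (scalar version of part 4); the third requires the one-step expansion $F_{n+3}=2F_{n+1}+F_n$ and $F_{n-3}=2F_{n-1}-F_n$, which give $F_{n+3}+F_{n-3}=2L_n$. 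Assembling the three components produces $(-1)^n L_n(i+j+2k)$, as claimed.

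The bicomplex Lucas identity follows by the same bookkeeping, using $L_{-m}=(-1)^m L_m$ in place of the Fibonacci rule. After pulling off $(-1)^n BL_n$, the residue in the $i$, $j$, $k$ slots involves $L_{n-1}+L_{n+1}$, $L_{n+2}-L_{n-2}$, and $L_{n-3}+L_{n+3}$; all three reduce to scalar multiples of $F_n$ via the single identity $L_{n-1}+L_{n+1}=5F_n$ (combined with a one-step recurrence reduction for the third), yielding $5F_n$, $5F_n$, and $10F_n$ respectively. The required $(-1)^{n+1}5F_n(i+j+2k)$ then drops out. There is no conceptual obstacle; the only care needed is tracking the parity of $(-1)^{n-r+1}$ for $r=0,1,2,3$ so that the $r=0$ slot lines up exactly with $(-1)^{n+1}BF_n$ (or $(-1)^n BL_n$), leaving the other three slots to assemble into a clean $i+j+2k$ combination.
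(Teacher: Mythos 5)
Your proposal is correct and follows essentially the same route as the paper: expand $BF_{-n}$ and $BL_{-n}$ componentwise, apply $F_{-m}=(-1)^{m+1}F_m$ and $L_{-m}=(-1)^m L_m$, peel off $(-1)^{n+1}BF_n$ (resp.\ $(-1)^n BL_n$), and collapse the residual $i,j,k$ coefficients via $F_{n-1}+F_{n+1}=L_n$, $F_{n+2}-F_{n-2}=L_n$, $F_{n+3}+F_{n-3}=2L_n$ and their Lucas analogues built on $L_{n-1}+L_{n+1}=5F_n$. All the sign bookkeeping in your sketch checks out, so nothing further is needed.
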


\begin{proof}
From the equations (2.1) and the identity of negafibonacci numbers which is $%
F_{-n}=\left( -1\right) ^{n+1}F_{n}$ \ (see Knuth \cite{Knuth} , Dunlap \cite%
{Dun}), we have%
\begin{eqnarray*}
BF_{-n} &=&F_{-n}+F_{-n+1}i+F_{-n+2}j+F_{-n+3}k \\
&=&F_{-n}+F_{-\left( n-1\right) }i+F_{-\left( n-2\right) }j+F_{-\left(
n-3\right) }k \\
&=&\left( -1\right) ^{n+1}F_{n}+\left( -1\right) ^{n}F_{n-1}i+\left(
-1\right) ^{n+1}F_{n-2}j+\left( -1\right) ^{n}F_{n-3}k \\
&=&\left( -1\right) ^{n+1}\left( F_{n}+F_{n+1}i+F_{n+2}j+F_{n+3}k\right)
-\left( -1\right) ^{n+1}F_{n+1}i \\
&&-\left( -1\right) ^{n+1}F_{n+2}j-\left( -1\right) ^{n+1}F_{n+3}k+\left(
-1\right) ^{n}F_{n-1}i \\
&&+\left( -1\right) ^{n+1}F_{n-2}j+\left( -1\right) ^{n}F_{n-3}k \\
&=&\left( -1\right) ^{n+1}BF_{n}+\left( -1\right) ^{n}\left(
F_{n+1}+F_{n-1}\right) i \\
&&+\left( -1\right) ^{n}\left( F_{n+2}-F_{n-2}\right) j+\left( -1\right)
^{n}\left( F_{n+3}+F_{n-3}\right) k
\end{eqnarray*}%
In this equation , we take into account that $F_{n-1}+F_{n+1}=L_{n}$, $\
F_{n+2}-F_{n-2}=L_{n},$ $\ F_{n+3}+F_{n-3}=2L_{n}$(see Vajda \cite{Vajda}) \
and the identity of negalucas numbers $L_{-n}=\left( -1\right) ^{n}L_{n}$
(see Knuth \cite{Knuth} ,Dunlap \cite{Dun}), thus we obtain;%
\begin{eqnarray*}
BF_{-n} &=&\left( -1\right) ^{n+1}BF_{n}+\left( -1\right) ^{n}L_{n}i+\left(
-1\right) ^{n}L_{n}j+\left( -1\right) ^{n}2L_{n} \\
&=&\left( -1\right) ^{n+1}BF_{n}+\left( -1\right) ^{n}L_{n}\left(
i+j+2k\right)
\end{eqnarray*}%
Now by using the equation (2.2) and the identity of negalucas numbers, we get%
\begin{eqnarray*}
BL_{-n} &=&L_{-n}+L_{-n+1}i+L_{-n+2}j+L_{-n+3}k \\
&=&L_{-n}+L_{-\left( n-1\right) }i+L_{-\left( n-2\right) }j+L_{-\left(
n-3\right) }k \\
&=&\left( -1\right) ^{n}L_{n}+\left( -1\right) ^{n-1}L_{n-1}i+\left(
-1\right) ^{n}L_{n-2}j+\left( -1\right) ^{n-1}L_{n-3}k \\
&=&\left( -1\right) ^{n}\left( L_{n}+L_{n+1}i+L_{n+2}j+L_{n+3}k\right)
-\left( -1\right) ^{n}L_{n+1}i \\
&&-\left( -1\right) ^{n}L_{n+2}j-\left( -1\right) ^{n}L_{n+3}k+\left(
-1\right) ^{n-1}L_{n-1}i+\left( -1\right) ^{n}L_{n-2}j \\
&&+\left( -1\right) ^{n-1}L_{n-3}k \\
&=&\left( -1\right) ^{n}BL_{n}+\left( -1\right) ^{n+1}\left( 
\begin{array}{c}
\left( L_{n+1}+L_{n-1}\right) i \\ 
+\left( L_{n+2}-L_{n-2}\right) j+\left( L_{n+3}+L_{n-3}\right) k%
\end{array}%
\right)
\end{eqnarray*}%
Here if we use the identity \ $L_{m+n}+L_{m-n}=\QDATOPD \{ . {5F_{m}F_{n}%
\text{ \ \ if \ }n\text{ \ is odd}}{L_{m}L_{n}\text{ \ \ \ \ otherwise}}$ \
\ (see Koshy, \cite{Koshy}), namely $L_{n-1}+L_{n+1}=5F_{n}$ , the
definition of dual Lucas number and the identity of negafibonacci number in
last equation, we complete the proof as;%
\begin{eqnarray*}
BL_{-n} &=&\left( -1\right) ^{n}BL_{n}+\left( -1\right) ^{n+1}5F_{n}i+\left(
-1\right) ^{n+1}5F_{n}j+\left( -1\right) ^{n+1}10F_{n}k \\
&=&\left( -1\right) ^{n}BL_{n}+\left( -1\right) ^{n+1}5F_{n}\left(
i+j+2k\right)
\end{eqnarray*}%
\bigskip
\end{proof}

The Binet formulas for Fibonacci and Lucas numbers are given by (see Koshy, 
\cite{Koshy})%
\begin{equation*}
F_{n}=\frac{\alpha ^{n}-\beta ^{n}}{\alpha -\beta }\text{ \ \ \ \ and \ \ \
\ }L_{n}=\alpha ^{n}+\beta ^{n}
\end{equation*}%
where 
\begin{equation*}
\alpha =\frac{1+\sqrt{5}}{2}\text{ \ \ \ and \ \ \ }\beta =\frac{1-\sqrt{5}}{%
2}.
\end{equation*}

\begin{theorem}
Let $BF_{n}$ and $BL_{n}$ be bicomplex Fibonacci and bicomplex Lucas
numbers, respectively. For $n\geq 0$ , the Binet formulas for these numbers
are given as;%
\begin{equation*}
BF_{n}=\dfrac{\overline{\alpha }\alpha ^{n}-\overline{\beta }\beta ^{n}}{%
\alpha -\beta }
\end{equation*}%
and%
\begin{equation*}
BL_{n}=\overline{\alpha }\alpha ^{n}+\overline{\beta }\beta ^{n}
\end{equation*}%
where $\overline{\alpha }=1+i\alpha +j\alpha ^{2}+k\alpha ^{3}$ \ and $%
\overline{\beta }=1+i\beta +j\beta ^{2}+k\beta ^{3}.$
\end{theorem}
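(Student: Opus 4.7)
The plan is to plug the classical Binet formulas into the definitions (2.1) and (2.2) and factor out $\alpha^n$ and $\beta^n$ from the four components. This is a direct computation rather than anything structural, so I would lay it out as a single chain of equalities for each formula, with the bicomplex units $i,j,k$ treated merely as formal real-linear combiners (the identity uses only the additive structure of $\mathbb{C}_2$, not multiplication of units by each other).

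First, for $BF_n$, I would start from
\begin{equation*}
BF_n = F_n + F_{n+1} i + F_{n+2} j + F_{n+3} k
\end{equation*}
and substitute $F_{n+r} = (\alpha^{n+r} - \beta^{n+r})/(\alpha - \beta)$ for $r = 0,1,2,3$. Since $\alpha - \beta$ is a common scalar denominator, I pull it out and then collect the $\alpha$-terms and $\beta$-terms separately:
\begin{equation*}
BF_n = \frac{\alpha^n(1 + i\alpha + j\alpha^2 + k\alpha^3) - \beta^n(1 + i\beta + j\beta^2 + k\beta^3)}{\alpha - \beta}.
\end{equation*}
Recognising the parenthesised expressions as $\overline{\alpha}$ and $\overline{\beta}$ completes that half. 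For $BL_n$, I would perform the analogous substitution using $L_{n+r} = \alpha^{n+r} + \beta^{n+r}$ and factor $\alpha^n$, $\beta^n$ out of the four summands in the same way, yielding $\overline{\alpha}\alpha^n + \overline{\beta}\beta^n$ directly.

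The only subtlety worth mentioning, and the one place I would be careful, is the order in which I write the scalar powers and the bicomplex units: since $\alpha,\beta$ are real scalars and $\mathbb{C}_2$ is a commutative ring, $i\alpha = \alpha i$ etc., so factoring $\alpha^n$ to the left of the bicomplex unit $\overline{\alpha}$ is legitimate and no noncommutativity obstruction arises. There is no real obstacle; the proof is a two-line verification, so I would keep the presentation compact rather than expanding each of the four coordinates.
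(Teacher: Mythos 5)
Your proposal is correct and coincides with the paper's own argument: the authors likewise substitute the classical Binet formulas into the definitions (2.1) and (2.2), factor $\alpha^{n}$ and $\beta^{n}$ out of the four components, and identify the resulting bracketed expressions with $\overline{\alpha}$ and $\overline{\beta}$. Nothing further is needed.
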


\begin{proof}
By using the Binet formulas for Fibonacci and Lucas numbers, taking $%
\overline{\alpha }=1+i\alpha +j\alpha ^{2}+k\alpha ^{3}$ \ and $\overline{%
\beta }=1+i\beta +j\beta ^{2}+k\beta ^{3}$, we find the result clearly as;%
\begin{eqnarray*}
BF_{n} &=&F_{n}+F_{n+1}i+F_{n+2}j+F_{n+3}k \\
&=&\dfrac{\alpha ^{n}-\beta ^{n}}{\alpha -\beta }+\dfrac{\alpha ^{n+1}-\beta
^{n+1}}{\alpha -\beta }i+\dfrac{\alpha ^{n+2}-\beta ^{n+2}}{\alpha -\beta }j+%
\dfrac{\alpha ^{n+3}-\beta ^{n+3}}{\alpha -\beta } \\
&=&\frac{\alpha ^{n}\left( 1+i\alpha +j\alpha ^{2}+k\alpha ^{3}\right)
-\beta ^{n}\left( 1+i\beta +j\beta ^{2}+k\beta ^{3}\right) }{\alpha -\beta }
\\
&=&\dfrac{\overline{\alpha }\alpha ^{n}-\overline{\beta }\beta ^{n}}{\alpha
-\beta }
\end{eqnarray*}%
and%
\begin{eqnarray*}
BL_{n} &=&L_{n}+L_{n+1}i+L_{n+2}j+L_{n+3}k \\
&=&\alpha ^{n}+\beta ^{n}+(\alpha ^{n+1}+\beta ^{n+1})i+(\alpha ^{n+2}+\beta
^{n+2})j+(\alpha ^{n+3}+\beta ^{n+3})k \\
&=&\alpha ^{n}(1+i\alpha +j\alpha ^{2}+k\alpha ^{3})+\beta ^{n}(1+i\beta
+j\beta ^{2}+k\beta ^{3}) \\
&=&\overline{\alpha }\alpha ^{n}+\overline{\beta }\beta ^{n}.
\end{eqnarray*}
\end{proof}

\begin{theorem}
Let $BF_{n}$ and $BL_{n}$ be bicomplex Fibonacci and bicomplex Lucas
numbers. For $n\geq 1$ ,the Cassini identities for $BF_{n}$ and $BL_{n}$ are
given by;%
\begin{equation*}
BF_{n+1}BF_{n-1}-BF_{n}^{2}=3\left( -1\right) ^{n}\left( 2j+k\right)
\end{equation*}%
and%
\begin{equation*}
BL_{n+1}BL_{n-1}-BL_{n}^{2}=5(-1)^{n-1}\left( 2j+k\right)
\end{equation*}
\end{theorem}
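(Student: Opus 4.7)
The plan is to apply the Binet formulas for $BF_n$ and $BL_n$ established in Theorem 4, which reduces both Cassini identities to a single bicomplex constant.

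First I would substitute $BF_n = (\bar\alpha\alpha^n - \bar\beta\beta^n)/(\alpha - \beta)$ into $BF_{n+1}BF_{n-1} - BF_n^2$ and expand. Because the bicomplex ring is commutative, the expansion proceeds exactly as in the scalar Cassini proof: the diagonal terms $\bar\alpha^2\alpha^{2n}$ and $\bar\beta^2\beta^{2n}$ appear in both products with the same coefficient and cancel, leaving
\[
BF_{n+1}BF_{n-1} - BF_n^2 = \frac{\bar\alpha\bar\beta\,(\alpha\beta)^{n-1}\bigl(2\alpha\beta - \alpha^2 - \beta^2\bigr)}{(\alpha-\beta)^2} = (-1)^n\bar\alpha\bar\beta,
\]
after using $\alpha\beta = -1$ and $2\alpha\beta - \alpha^2 - \beta^2 = -(\alpha-\beta)^2$. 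The parallel calculation for the Lucas case carries the opposite sign on the cross-term combination and is not divided by $(\alpha-\beta)^2$, producing $BL_{n+1}BL_{n-1} - BL_n^2 = (\alpha-\beta)^2(-1)^{n-1}\bar\alpha\bar\beta$. Using $(\alpha-\beta)^2 = 5$ this becomes a constant multiple of $\bar\alpha\bar\beta$.

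The main obstacle, and the only genuinely new bicomplex step, is to evaluate the product $\bar\alpha\bar\beta$ with $\bar\alpha = 1 + i\alpha + j\alpha^2 + k\alpha^3$ and $\bar\beta = 1 + i\beta + j\beta^2 + k\beta^3$. I would expand all sixteen terms using the rules $i^2 = j^2 = -1$, $k^2 = 1$, $ij = k$, $jk = -i$, $ik = -j$, then collect by component. Each of the scalar, $i$, $j$, $k$ coefficients is a symmetric polynomial in $\alpha,\beta$ and reduces via $\alpha+\beta = 1$, $\alpha\beta = -1$, $\alpha^2+\beta^2 = 3$, $\alpha^3+\beta^3 = 4$. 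I anticipate the scalar and $i$ components to vanish (they involve the factors $1 - (\alpha\beta)^2$ and $(\alpha+\beta)(1-\alpha^2\beta^2)$ respectively), while the $j$ and $k$ components collapse to a pure multiple of $2j + k$.

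Plugging the resulting constant $\bar\alpha\bar\beta$ back into the two reductions displayed above yields both stated Cassini identities simultaneously. The whole proof is really the scalar Cassini derivation run in the bicomplex ring, with commutativity doing the heavy lifting and the single constant $\bar\alpha\bar\beta$ accounting for the appearance of $2j+k$ on the right-hand side.
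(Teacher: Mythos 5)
Your route is genuinely different from the paper's. The paper never touches the Binet form here: it expands $BF_{n+1}BF_{n-1}-BF_n^2$ componentwise from the definition and invokes the scalar identities $F_mF_{n+1}-F_{m+1}F_n=(-1)^nF_{m-n}$ and $L_{n-1}L_{n+1}-L_n^2=5(-1)^{n-1}$ to collapse each coefficient. Your reduction to the single constant $\overline{\alpha}\overline{\beta}$ via commutativity is cleaner and buys both identities from one computation; the algebra you set up ($2\alpha\beta-\alpha^2-\beta^2=-(\alpha-\beta)^2$, $(\alpha\beta)^{n-1}=(-1)^{n-1}$) is correct, and so is your claim that the scalar and $i$ components of $\overline{\alpha}\overline{\beta}$ vanish.

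The gap is that you never actually evaluate $\overline{\alpha}\overline{\beta}$, you only ``anticipate'' its shape, and the final sentence asserting that both stated identities follow is not something your computation delivers. Carrying out the sixteen-term product with $\alpha+\beta=1$, $\alpha\beta=-1$, $\alpha^2+\beta^2=3$, $\alpha^3+\beta^3=4$ gives
\begin{equation*}
\overline{\alpha}\,\overline{\beta}=\bigl(\alpha^2+\beta^2\bigr)(1-\alpha\beta)\,j+\bigl(\alpha^3+\beta^3+\alpha\beta(\alpha+\beta)\bigr)k=6j+3k=3(2j+k).
\end{equation*}
Your Fibonacci reduction $(-1)^n\overline{\alpha}\overline{\beta}$ then gives $3(-1)^n(2j+k)$, matching the theorem. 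But your Lucas reduction $5(-1)^{n-1}\overline{\alpha}\overline{\beta}$ gives $15(-1)^{n-1}(2j+k)$, which is three times the stated right-hand side. A direct check at $n=1$ ($BL_2BL_0-BL_1^2=30j+15k$) confirms that your method is producing the correct value and that the discrepancy lies in the theorem's statement (and in the paper's own Lucas computation, which at one point reads $5(-1)^{n-1}(1+\varepsilon)$ with an undefined $\varepsilon$). So as an argument for the theorem as stated, your proof does not close on the Lucas half; you need to either finish the evaluation and flag the factor of $3=|2j+k|$-type mismatch explicitly, or accept that the identity you actually prove is $BL_{n+1}BL_{n-1}-BL_n^2=15(-1)^{n-1}(2j+k)$.
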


\begin{proof}
From the equation (2.1), we have%
\begin{eqnarray*}
BF_{n+1}BF_{n-1}-BF_{n}^{2} &=&\left(
F_{n+1}+F_{n+2}i+F_{n+3}j+F_{n+4}k\right) (F_{n-1} \\
&&+F_{n}i+F_{n+1}j+F_{n+2}k) \\
&&-\left( F_{n}+F_{n+1}i+F_{n+2}j+F_{n+3}k\right) ^{2}.
\end{eqnarray*}%
If we use the identity $F_{m}F_{n+1}-F_{m+1}F_{n}=\left( -1\right)
^{n}F_{m-n}$ (see Weisstein \cite{Weis} ) and $F_{-n}=\left( -1\right)
^{n+1}F_{n}$ \ (see Knuth \cite{Knuth} ) in the above equation , we get%
\begin{eqnarray*}
BF_{n+1}BF_{n-1}-BF_{n}^{2} &=&\left[ 2\left( -1\right) ^{n+2}\right] 3j+%
\left[ 3\left( -1\right) ^{n+2}\right] k \\
&=&3\left( -1\right) ^{n}\left( 2j+k\right)
\end{eqnarray*}%
Similarly for bicomplex Lucas numbers we can simply get;%
\begin{eqnarray*}
BL_{n+1}BL_{n-1}-BL_{n}^{2} &=&\left(
L_{n+1}+L_{n+2}i+L_{n+3}j+L_{n+4}k\right) (L_{n-1} \\
&&+L_{n}i+L_{n+1}j+L_{n+2}k) \\
&&-\left( L_{n}+L_{n+1}i+L_{n+2}j+L_{n+3}k\right) ^{2}.
\end{eqnarray*}%
By using the identity of Lucas numbers which are $%
L_{n-1}L_{n+1}-L_{n}^{2}=5(-1)^{n-1}$ (see Koshy \cite{Koshy}) and $%
L_{n+2}=L_{n+1}+L_{n}$ (see Dunlap \cite{Dun}) in the above \ equation, we
compute the following expression;%
\begin{eqnarray*}
BL_{n+1}BL_{n-1}-BL_{n}^{2} &=&=5(-1)^{n-1}(1+\varepsilon ) \\
&=&5(-1)^{n-1}\left( 2j+k\right) .
\end{eqnarray*}
\end{proof}

\begin{theorem}
Let $BF_{n}$ be a bicomplex Fibonacci number. The Catalan's identity for $%
BF_{n}$ is given by%
\begin{equation*}
BF_{n}^{2}-BF_{n+r}BF_{n-r}=\left( -1\right) ^{n-r}\left[ 2\left(
F_{r-2}^{2}+F_{r}^{2}\right) j+\left(
F_{r+1}^{2}+F_{r-2}^{2}-F_{r}^{2}-F_{r-3}^{2}\right) k\right] .
\end{equation*}
\end{theorem}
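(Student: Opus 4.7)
The strategy is to expand both $BF_n^2$ and $BF_{n+r}BF_{n-r}$ component-by-component using Definition 1 and then reduce the resulting Fibonacci expressions via the classical Catalan identity $F_m^2 - F_{m+r}F_{m-r} = (-1)^{m-r}F_r^2$ (Koshy). This is exactly the template already used to prove Cassini (Theorem 5), which is just the special case $r=1$.

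First I would square $F_n + F_{n+1}i + F_{n+2}j + F_{n+3}k$ using $i^2=j^2=-1$, $k^2=1$, $ij=k$, $jk=-i$, $ik=-j$. The four components of $BF_n^2$ are
\begin{equation*}
F_n^2 - F_{n+1}^2 - F_{n+2}^2 + F_{n+3}^2,\quad 2(F_nF_{n+1}-F_{n+2}F_{n+3}),\quad 2(F_nF_{n+2}-F_{n+1}F_{n+3}),\quad 2(F_nF_{n+3}+F_{n+1}F_{n+2}).
\end{equation*}
Expanding $BF_{n+r}BF_{n-r}$ the same way yields, in each component, a symmetric sum of four Fibonacci products $F_{n+r+\ell}F_{n-r+\ell'}$ whose index-sums $\ell+\ell'$ match those appearing in $BF_n^2$. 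Subtracting component-by-component then produces differences of type $F_{n+\ell}^2 - F_{n+r+\ell}F_{n-r+\ell}$, to which Catalan applies directly, together with off-diagonal pieces $F_aF_b - F_{a+r}F_{b-r}$.

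The off-diagonal pieces I would handle by Binet: the identity $F_aF_b = \tfrac{1}{5}\bigl(L_{a+b} - (-1)^{\min(a,b)}L_{|a-b|}\bigr)$ ensures that all $L_{2n+\ast}$ contributions (the ones that depend on $n$) cancel between $BF_n^2$ and $BF_{n+r}BF_{n-r}$, leaving $\tfrac{(-1)^{n-r}}{5}$ times a short combination of $L_{2r}$, $L_{2r\pm 1}$, $L_{2r\pm 2}$, $L_{2r\pm 3}$. Using $L_{m+1}-L_{m-1}=L_m$ and $L_{m+2}+L_{m-2}=3L_m$ these collapse to pure multiples of $L_{2r}$, and $L_{2r}=5F_r^2+2(-1)^r$ then converts the answer into a multiple of $F_r^2$. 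A final rewriting via $F_r = F_{r-1}+F_{r-2}$ (and its iterates) distributes this $F_r^2$ into the squared-Fibonacci sums $2(F_{r-2}^2+F_r^2)$ on $j$ and $F_{r+1}^2+F_{r-2}^2-F_r^2-F_{r-3}^2$ on $k$, while the real and $i$ components vanish by the alternating-sign cancellation.

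The main obstacle is the last rearrangement step. Catalan forces the correct numerical content on each component, but packaging that single $F_r^2$ kernel into the specific four-term squared-Fibonacci combinations claimed in the statement is purely a matter of Fibonacci bookkeeping; I would verify it numerically at $r=1$ against Cassini (Theorem 5) as a consistency check before writing down the general form, and then confirm the $i$- and real-components genuinely vanish so the right-hand side has no contribution outside the $j$- and $k$-directions.
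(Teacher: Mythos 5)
Your expansion-plus-classical-Catalan strategy is the same one the paper sketches, and your mechanism for the cross terms is sound: with $F_aF_b=\tfrac{1}{5}\bigl(L_{a+b}-(-1)^{\min(a,b)}L_{|a-b|}\bigr)$ the $n$-dependent $L_{2n+\ast}$ contributions cancel between $BF_n^2$ and $BF_{n+r}BF_{n-r}$, the real and $i$ components vanish, and the $j$ and $k$ components reduce (using $L_{m+2}+L_{m-2}=3L_m$ and, which you omitted, $L_{m+3}-L_{m-3}=4L_m$, then $L_{2r}=5F_r^2+2(-1)^r$) to $6(-1)^{n-r}F_r^2$ and $3(-1)^{n-r}F_r^2$. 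So your method, carried to completion, proves
\begin{equation*}
BF_n^2-BF_{n+r}BF_{n-r}=3(-1)^{n-r}F_r^2\left(2j+k\right).
\end{equation*}

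The genuine gap is in the step you defer as ``purely a matter of Fibonacci bookkeeping'': it cannot be carried out, because $2(F_{r-2}^2+F_r^2)\neq 6F_r^2$ and $F_{r+1}^2+F_{r-2}^2-F_r^2-F_{r-3}^2\neq 3F_r^2$ in general, i.e.\ the theorem as printed is false. The $r=1$ consistency check you propose already detects this: Cassini (Theorem 5) gives $BF_n^2-BF_{n+1}BF_{n-1}=(-1)^{n-1}(6j+3k)$, whereas the stated Catalan formula at $r=1$ evaluates to $(-1)^{n-1}\bigl(2(F_{-1}^2+F_1^2)j+(F_2^2+F_{-1}^2-F_1^2-F_{-2}^2)k\bigr)=(-1)^{n-1}\cdot 4j$. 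A direct check at $n=r=2$ gives $BF_2^2-BF_4BF_0=6j+3k$ against the claimed $2j+2k$. The paper's own proof never performs the ``necessary calculations'' it appeals to, which is how the error survives there. Your plan is methodologically correct but must end by reporting the corrected right-hand side $3(-1)^{n-r}F_r^2(2j+k)$; you cannot massage the single $F_r^2$ kernel into the four-term squared-Fibonacci combinations claimed in the statement.
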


\begin{proof}
From the equation (2.1), we have%
\begin{eqnarray*}
BF_{n}^{2}-BF_{n+r}BF_{n-r} &=&\left[ 
\begin{array}{c}
F_{n}^{2}-F_{n+1}^{2}-F_{n+2}^{2}+F_{n+3}^{2}-F_{n+r}F_{n-r}+ \\ 
F_{n+r+1}F_{n-r-1}+F_{n+r+2}F_{n-r+2}-F_{n+r+3}F_{n-r+3}%
\end{array}%
\right] \\
&&+\left[ 
\begin{array}{c}
2F_{n}F_{n+1}-2F_{n+2}F_{n+3}-F_{n+r}F_{n-r+1}- \\ 
F_{n+r+1}F_{n-r}+F_{n+r+2}F_{n-r+3}+F_{n+r+3}F_{n-r+2}%
\end{array}%
\right] i \\
&&+\left[ 
\begin{array}{c}
2F_{n}F_{n+2}-2F_{n+1}F_{n+3}-F_{n+r}F_{n-r+2}- \\ 
F_{n+r+2}F_{n-r}+F_{n+r+1}F_{n-r+3}+F_{n+r+3}F_{n-r+1}%
\end{array}%
\right] j \\
&&+\left[ 
\begin{array}{c}
2F_{n}F_{n+3}+2F_{n+1}F_{n+2}-F_{n+r}F_{n-r+3}- \\ 
F_{n+r+3}F_{n-r}-F_{n+r+1}F_{n-r+2}-F_{n+r+2}F_{n-r+1}%
\end{array}%
\right] k
\end{eqnarray*}%
Here using the Catalan's identity for Fibonacci numbers $%
F_{n}^{2}-F_{n-r}F_{n+r}=(-1)^{n-r}F_{r}^{2}$ \ (see Weisstein \cite{Weis})
, $F_{n+1}^{2}-F_{n-1}^{2}=F_{2n}$ (see Vajda \cite{Vajda}) and doing
necessary calculations, we obtain the result clearly.

Also by adding and subtracting the term $\left( -1\right) ^{n-r}BF_{r}^{2}$,
we can get the result as; 
\begin{eqnarray*}
BF_{n}^{2}-BF_{n+r}BF_{n-r} &=&\left( -1\right) ^{n-r}\left[ 2\left(
F_{r-2}^{2}+F_{r}^{2}\right) j+\left(
F_{r+1}^{2}+F_{r-2}^{2}-F_{r}^{2}-F_{r-3}^{2}\right) k\right] \\
&=&\left( -1\right) ^{n-r}BF_{r}^{2}-\left( -1\right) ^{n-r}BF_{r}^{2} \\
&&+\left( -1\right) ^{n-r}\left( 
\begin{array}{c}
2\left( F_{r-2}^{2}+F_{r}^{2}\right) j \\ 
+\left( F_{r+1}^{2}+F_{r-2}^{2}-F_{r}^{2}-F_{r-3}^{2}\right) k%
\end{array}%
\right) \\
&=&\left( -1\right) ^{n-r}\left( 
\begin{array}{c}
BF_{r}^{2}-F_{2r+3}+2F_{2r+3}i \\ 
+2\left( F_{r+2}F_{r-1}+F_{2r+1}\right) j \\ 
-\left( 
\begin{array}{c}
2F_{r}F_{r+3}+2F_{r+1}F_{r+2}+F_{r}^{2} \\ 
+F_{r-3}^{2}-F_{r+1}^{2}-F_{r-2}^{2}%
\end{array}%
\right) k%
\end{array}%
\right) .
\end{eqnarray*}
\end{proof}

\bigskip

Semra Kaya Nurkan

U\c{s}ak University

Department of Mathematics

64200, U\c{s}ak, Turkey

E-mail: semra.kaya@usak.edu.tr, semrakaya\_gs@yahoo.com

\bigskip

\.{I}lkay Arslan G\"{u}ven

Gaziantep University

Department of Mathematics

\c{S}ehitkamil, 27310, Gaziantep, Turkey

E-mail: iarslan@gantep.edu.tr, ilkayarslan81@hotmail.com

\end{document}